\title[Symmetry of derivation Lie algebras]
{Symmetry of symplectic derivation Lie algebras of free Lie algebras}
\author{Shigeyuki Morita}
\address{Graduate School of Mathematical Sciences, 
The University of Tokyo, 
3-8-1 Komaba Meguro-ku Tokyo 153-8914, Japan}
\email{morita@ms.u-tokyo.ac.jp}
\author{Takuya Sakasai}
\address{Graduate School of Mathematical Sciences, 
The University of Tokyo, 
3-8-1 Komaba Meguro-ku Tokyo 153-8914, Japan}
\email{sakasai@ms.u-tokyo.ac.jp}
\author{Masaaki Suzuki}
\address{Department of Frontier Media Science,  
Meiji University, 
4-21-1 Nakano, Nakano-ku, Tokyo  164-8525, Japan}
\email{macky@fms.meiji.ac.jp}
\subjclass[2000]{Primary~17B40, Secondary~17B56; 17B65}
\keywords{free Lie algebra, symplectic derivation, Young diagram, conjugate Young diagram}
\newtheorem{thm}{Theorem}[section]
\newtheorem{prop}[thm]{Proposition}
\newtheorem{lem}[thm]{Lemma}
\theoremstyle{definition}
\newtheorem{remark}[thm]{Remark}
\begin{document}

\newcommand{\Mg}{\mathcal{M}_g}
\newcommand{\Mgp}{\mathcal{M}_{g,\ast}}
\newcommand{\Mgb}{\mathcal{M}_{g,1}}

\newcommand{\hg}{\mathfrak{h}_{g,1}}
\newcommand{\ag}{\mathfrak{a}_g}
\newcommand{\Ln}{\mathcal{L}_n}

\newcommand{\Sg}{\Sigma_g}
\newcommand{\Sgb}{\Sigma_{g,1}}
\newcommand{\la}{\lambda}

\newcommand{\Symp}[1]{Sp(2g,\mathbb{#1})}
\newcommand{\symp}[1]{\mathfrak{sp}(2g,\mathbb{#1})}
\newcommand{\gl}[1]{\mathfrak{gl}(n,\mathbb{#1})}

\newcommand{\At}[1]{\mathcal{A}_{#1}^t (H)}
\newcommand{\Hq}{H_{\mathbb{Q}}}

\newcommand{\Ker}{\mathop{\mathrm{Ker}}\nolimits}
\newcommand{\Hom}{\mathop{\mathrm{Hom}}\nolimits}
\renewcommand{\Im}{\mathop{\mathrm{Im}}\nolimits}

\newcommand{\Der}{\mathop{\mathrm{Der}}\nolimits}
\newcommand{\Out}{\mathop{\mathrm{Out}}\nolimits}
\newcommand{\Aut}{\mathop{\mathrm{Aut}}\nolimits}
\newcommand{\Q}{\mathbb{Q}}
\newcommand{\Z}{\mathbb{Z}}

\begin{abstract}
We show that a certain symmetry exists in the stable
irreducible decomposition of the Lie algebra consisting
of symplectic derivations of the free Lie algebra generated
by the first homology group of compact oriented surfaces.
\end{abstract}

\renewcommand\baselinestretch{1.1}
\setlength{\baselineskip}{16pt}

\newcounter{fig}
\setcounter{fig}{0}

\maketitle

\section{Introduction and statement of the main result}\label{sec:intro}

Let $\Sigma_{g,1}$ be a compact oriented surface of genus $g\geq 1$ 
with one boundary component and 
we denote $H_1(\Sigma_{g,1};\Q)$ simply by $H$. 
Equipped with the 
skew symmetric bilinear form induced by the intersection 
pairing, the space $H$ can be regarded as the standard symplectic 
vector space of dimension $2g$. Let $\mathcal{L}_H$ be the 
free Lie algebra generated by $H$ and let $\mathcal{L}_H(k)$ 
be the degree $k$ homogeneous part of it with respect to the 
natural grading. Let $\mathfrak{h}_{g,1}$ be the Lie algebra 
consisting of {\it symplectic} derivations of $\mathcal{L}_H$, 
which we call the symplectic derivation Lie algebra.
It has a natural grading and the degree $k$ part can be 
expressed as
\begin{equation}
\label{eq:sd}
\begin{split}
\mathfrak{h}_{g,1}(k)&=
\left\{D\in \mathrm{Hom}(H,\mathcal{L}_H(k+1)); D(\omega_0)=0\right\}\\
&\cong \mathrm{Ker}\left(H\otimes \mathcal{L}_H(k+1)
\overset{[\ ,\ ]}{\longrightarrow}\mathcal{L}_H(k+2)\right)
\end{split}
\end{equation}
where $\omega_0\in \mathcal{L}_H(2)\cong\wedge^2 H$ denotes
the symplectic class and $[\ ,\ ]$ denotes the bracket
operation on the Lie algebra $\mathcal{L}_H$.
The second isomorphism is induced by the Poincar\'e
duality $H^*\cong H$ (see \cite{morita93} for details).
The Lie algebra $\mathfrak{h}_{g,1}$ plays a fundamental role
first in the theory of Johnson homomorphisms and
more recently in the cohomological study of the outer
automorphism group $\mathrm{Out}F_n$ of free groups
which was initiated by the Lie version of the theory
of graph homology due to Kontsevich \cite{kontsevich1}\cite{kontsevich2}. 
We refer to \cite{morita06} for a recent survey and
also to more recent papers
including  \cite{es} where Enomoto and Satoh 
found a new series in the cokernel of the
Johnson homomorphisms, our former paper \cite{mss1},
and \cite{ckv} where Conant, Kassabov and Vogtmann
present a remarkable new development.

Let $\mathrm{Sp}(2g,\Q)$ be the symplectic group which
we sometimes denote simply by $\mathrm{Sp}$. If we fix a symplectic 
basis of $H$, then it can be 
considered as the standard representation of $\mathrm{Sp}(2g,\Q)$.
Each piece $\mathfrak{h}_{g,1}(k)$ is naturally an $\mathrm{Sp}$-module so that it has an irreducible decomposition. It is 
well known that this decomposition stabilizes when
$g$ is sufficiently large.

The purpose of this note is to show that a certain symmetry exists 
in the structure of $\mathfrak{h}_{g,1}(k)$. 
To describe our symmetry, we have to 
consider $\mathfrak{h}_{g,1}(k)$
as a $\mathrm{GL}(2g,\Q)$-module rather than an
$\mathrm{Sp}$-module. We define this structure by the identification
of $\mathfrak{h}_{g,1}(k)$ with the {\it second\/} module
in \eqref{eq:sd} which has a natural structure of
a $\mathrm{GL}(2g,\Q)$-module because the bracket operation
is clearly a $\mathrm{GL}(2g,\Q)$-morphism.
It follows that each $\mathfrak{h}_{g,1}(k)$ 
has an irreducible decomposition as a
$\mathrm{GL}(2g,\Q)$-module and this decomposition 
also stabilizes when $g$ is sufficiently large.
This stable decomposition can be described as
a linear combination of Young diagrams with 
$(k+2)$ boxes. The original decomposition as 
an $\mathrm{Sp}$-module can be obtained by
applying the restriction law associated 
with the pair $(\mathrm{GL}(2g,\Q), \mathrm{Sp}(2g,\Q))$.

To describe our result, we need one more technical 
term from the theory of representations of 
symmetric groups (see \cite{fh} for example). 
Any Young diagram $\la$ with
$k$ boxes defines an irreducible representation 
$V_\la$ of the symmetric group $\mathfrak{S}_k$.
The {\it conjugate} Young diagram of $\la$,
denoted by $\la'$, is the one obtained by
interchanging the roles of rows and columns. 
For example $[4^21^4]'=[62^3]$. As is well known,
the associated representation $V_{\la'}$ is isomorphic
to the tensor product of $V_\la$ with the alternating
representation $V_{[1^k]}$.

Now we can state our main result.

\begin{thm}\label{thm:s}
Let $\mathfrak{h}_{g,1}$ be the symplectic derivation
Lie algebra of the free Lie algebra $\mathcal{L}_H$ and
let $\mathfrak{h}_{g,1}(k) 
=\mathrm{Ker} \big( [\,\ ,\ ]:H\otimes \mathcal{L}_H(k+1) 
\to \mathcal{L}_H(k+2)\big)$ be
the degree $k$ part of it which we understand as
a $\mathrm{GL}(2g,\Q)$-module.

Assume that $k\equiv 2\ \text{or}\ 3\ \mathrm{mod} \ 4$.
Then the stable irreducible decomposition of
$\mathfrak{h}_{g,1}(k)$, 
which is expressed 
as a linear combination of Young
diagrams with $(k+2)$ boxes,
is symmetric with respect to taking the
conjugate Young diagrams. 
\end{thm}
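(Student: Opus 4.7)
\medskip

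\noindent\textbf{Proof proposal.} The plan is to translate the problem, via Schur--Weyl duality in the stable range $2g\geq k+2$, into a statement about $\mathfrak{S}_N$-representations (with $N=k+2$), and then to reduce it to a parity analysis of the classical Lie representation. Under the Schur functor $U\mapsto\mathbb{S}_U(H)$, the irreducible $V_\la$ corresponds to $S^\la H$; since $V_{\la'}\cong V_\la\otimes V_{[1^N]}$, the desired conjugate-diagram symmetry is equivalent to the invariance $W_N\otimes\mathrm{sgn}\cong W_N$ of the (virtual) $\mathfrak{S}_N$-representation $W_N$ corresponding to $\mathfrak{h}_{g,1}(k)$. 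First I would identify $W_N$ explicitly. Classically $\mathcal{L}_H(n)$ corresponds under Schur--Weyl to the Lie representation $\mathrm{Lie}_n$ of $\mathfrak{S}_n$, hence $H\otimes\mathcal{L}_H(N-1)$ corresponds to $\mathrm{Ind}_{\mathfrak{S}_{N-1}}^{\mathfrak{S}_N}\mathrm{Lie}_{N-1}$; since the bracket is surjective and the characteristic is zero, the defining short exact sequence of $\mathrm{GL}$-modules splits, yielding
\[
W_N \;=\; \mathrm{Ind}_{\mathfrak{S}_{N-1}}^{\mathfrak{S}_N}\mathrm{Lie}_{N-1}\;-\;\mathrm{Lie}_N
\]
in the representation ring of $\mathfrak{S}_N$.

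The key input is Klyachko's description $\mathrm{Lie}_n\cong\mathrm{Ind}_{C_n}^{\mathfrak{S}_n}\chi_n$, where $C_n\subset\mathfrak{S}_n$ is the cyclic subgroup generated by an $n$-cycle and $\chi_n$ is a faithful linear character. By the projection formula, $\mathrm{Lie}_n\otimes\mathrm{sgn}\cong\mathrm{Ind}_{C_n}^{\mathfrak{S}_n}(\chi_n\cdot\mathrm{sgn}|_{C_n})$. Since an $n$-cycle has sign $(-1)^{n-1}$, the restriction $\mathrm{sgn}|_{C_n}$ is trivial for $n$ odd and is the unique order-$2$ character of $C_n$ for $n$ even. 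A short order computation then shows that the twisted character $\chi_n\cdot\mathrm{sgn}|_{C_n}$ remains faithful precisely when $n\not\equiv 2\pmod 4$, and in that range one has $\mathrm{Lie}_n\otimes\mathrm{sgn}\cong\mathrm{Lie}_n$.

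Finally, for $k\equiv 2\pmod 4$ one has $N\equiv 0$ and $N-1\equiv 3\pmod 4$, while for $k\equiv 3\pmod 4$ one has $N\equiv 1$ and $N-1\equiv 0\pmod 4$; in both cases neither $N$ nor $N-1$ lies in the excluded class $2\pmod 4$. Hence both Lie pieces in $W_N$ are $\mathrm{sgn}$-invariant, and combining with $(\mathrm{Ind}\,V)\otimes\mathrm{sgn}\cong\mathrm{Ind}(V\otimes\mathrm{sgn})$ gives $W_N\otimes\mathrm{sgn}\cong W_N$ as required. The main obstacle is the parity analysis characterizing when $\mathrm{Lie}_n\otimes\mathrm{sgn}\cong\mathrm{Lie}_n$; the remainder is a routine combination of Schur--Weyl duality and Frobenius reciprocity.
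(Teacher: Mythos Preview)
Your argument is correct and follows the same architecture as the paper: pass via Schur--Weyl to an $\mathfrak{S}_{k+2}$-module, identify it as $\mathrm{Ind}_{\mathfrak{S}_{k+1}}^{\mathfrak{S}_{k+2}}\mathrm{Lie}_{k+1}-\mathrm{Lie}_{k+2}$, and prove each Lie piece is $\mathrm{sgn}$-invariant exactly when its index is $\not\equiv 2\pmod 4$. The difference lies in how that last step is carried out. The paper uses the Witt--Brandt character formula, which says $\chi_{\mathrm{Lie}_n}$ is supported on cycle types $a^b$ with value proportional to $\mu(a)$, and then proves a small divisibility lemma (if $n\not\equiv 2\pmod 4$ and $n=ab$ with $a$ even and $\mu(a)\neq 0$, then $b$ is even) to conclude that the support consists of even permutations. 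You instead invoke Klyachko's model $\mathrm{Lie}_n\cong\mathrm{Ind}_{C_n}^{\mathfrak{S}_n}\chi_n$ and check directly when the $\mathrm{sgn}$-twist of a faithful character of $C_n$ stays faithful. Both routes encode the same phenomenon; yours is a bit more structural, the paper's a bit more elementary and self-contained (no appeal to Klyachko).

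One point you should make explicit: after showing that $\chi_n\cdot\mathrm{sgn}|_{C_n}$ is again faithful, you need that \emph{any} faithful linear character of $C_n$ induces up to the same $\mathfrak{S}_n$-representation. This holds because the normalizer $N_{\mathfrak{S}_n}(C_n)$ surjects onto $\mathrm{Aut}(C_n)\cong(\mathbb{Z}/n)^\times$, so all faithful characters are conjugate under $\mathfrak{S}_n$; but it is worth saying, since without it the conclusion $\mathrm{Lie}_n\otimes\mathrm{sgn}\cong\mathrm{Lie}_n$ does not immediately follow from faithfulness alone.
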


The first two cases are given by 
$\mathfrak{h}_{g,1}(2)=[2^2],
\mathfrak{h}_{g,1}(3)=[31^2]$
(see \cite{hain} \cite{an}) both of which are
symmetric (i.e. self-conjugate) Young diagrams.
Here, for simplicity, we use the same symbol of Young diagram 
for the corresponding irreducible $\mathrm{GL}(2g,\Q)$-module.
The summand $[2^2]$ plays a very important role
in Hain's infinitesimal presentation of the Torelli
groups while the summand $[31^2]$ is the first
place where the trace maps introduced in
\cite{morita93} appear.
In higher degrees, there occur plural irreducible
components. In these cases, our result shows that the multiplicity $m_\la$
corresponding to a Young diagram $\la$ is equal to the multiplicity $m_{\la'}$ corresponding to 
the conjugate Young diagram $\la'$.

\begin{remark}
By combining the technique of this paper with that of
\cite{morita11}, we obtain a structure theorem for 
the Lie subalgebra
$\mathfrak{h}^{\mathrm{Sp}}_{g,1}\subset
\mathfrak{h}_{g,1}$
consisting of $\mathrm{Sp}$-invariant elements.
More precisely, we obtain an orthogonal
basis, with respect to a certain canonical metric, 
for each degree $2k$ part $\mathfrak{h}^{\mathrm{Sp}}_{g,1}(2k)$
which is parametrized by the set of partitions
of $k$ (equivalently the set of Young diagrams with
$k$ boxes)
each element of which represents
an eigenspace of mutually different eigenvalue. 
This gives, in particular, a complete description of
the degeneration of $\mathfrak{h}^{\mathrm{Sp}}_{g,1}(2k)$
from the stable range of $g$ one by one
down to the case of genus one
which corresponds to the eigenspace with the {\it largest}
eigenvalue.
This orthogonal direct sum decomposition
should be useful in the study of the arithmetic mapping class group, cohomology of $\mathrm{Out} F_n$ and
eventually the cohomology of the group of homology cobordism
classes of homology cylinders.
Details are discussed in \cite{mss2}.
\end{remark}

\begin{remark}
Besides the theoretical meaning, our main theorem 
can be used to make explicit computer calculations
concerning the structure of the Lie algebra $\hg$ 
as well as other related Lie algebras
considerably easier. 
See our paper \cite{mss1a} for details.
In fact, we noticed this symmetry
when we tried to overcome memory problems which arose 
in our earlier computer calculations.
%This is valid for any degree $k$ including the cases
%$k\equiv 0\ \text{or}\ 1\ \mathrm{mod} \ 4$ as well 
%
%
%because in these cases, although there is no symmetry, we 
%
%
%
%and 
%
With the help of this technique, 
we have determined the irreducible decomposition of 
$\mathfrak{h}_{g,1}(k)$ for all $k\leq 20$. 
For example, the dimensions of $\mathfrak{h}^{\mathrm{Sp}}_{g,1}(18)$
and
$\mathfrak{h}^{\mathrm{Sp}}_{g,1}(20)$ are given 
by the following tables. 
\begin{table}[h]
%\caption{$\text{Dimensions of $\mathfrak{h}_{g,1}(2k)^{\mathrm{Sp}}$\  $\mathrm{(i)}$}$}
\begin{center}
\begin{tabular}{|c|r|r|r|r|r|r|r|r|r|}
\noalign{\hrule height0.8pt}
\hfil {} & $g=1$ & $g=2$ & $g=3$ & $g=4$ & $g=5$  \\
\hline
$\mathfrak{h}_{g,1}(18)^{\mathrm{Sp}}$ & $57$ & $100908$ & $888099$ & $1548984$ & 
$1710798$ \\
\hline
$\mathfrak{h}_{g,1}(20)^{\mathrm{Sp}}$ & $108$ & $869798$ & $12057806$ & $25062360$ & $29129790$ \\
\noalign{\hrule height0.8pt}
\end{tabular}
\end{center}
\label{tab:i1}
\end{table}

\begin{table}[h]
%\caption{$\text{Dimensions of $\mathfrak{h}_{g,1}(2k)^{\mathrm{Sp}}$\  $\mathrm{(ii)}$}$}
\begin{center}
\begin{tabular}{|c|r|r|r|r|r|r|r|r|r|}
\noalign{\hrule height0.8pt}
\hfil {} & $g=6$ & $g=7$ & $g=8$ & $g \geq 9$   \\
\hline
$\mathfrak{h}_{g,1}(18)^{\mathrm{Sp}}$ & $1728591$ & $1729620$ & $1729656$ & $1729657$ \\
\hline
$\mathfrak{h}_{g,1}(20)^{\mathrm{Sp}}$ & $29688027$ & $29728348$ & $29729957$ & $29729988$ \\
\noalign{\hrule height0.8pt}
\end{tabular}
\end{center}
\label{tab:i2}
\end{table}

%$$
%57, 100908, 888099, 1548984, 1710798, 1728591,
%1729620, 1729656, 1729657
%$$
%and
%$$
%108, 869798, 12057806, 25062360,29129790, 29688027,
%29728348,29729957,29729988
%$$ 
%for
%$g=1,2,\cdots,8,\geq 9$, respectively.
\end{remark}

\par
\noindent
{\it Acknowledgement}\
The authors would like to thank Naoya Enomoto for 
informing them that Proposition 3.3 was already proved
by Zhuravlev in \cite{z}. He points out that,
our main theorem also follows from this by applying Pieri's formula on the middle
term of the exact sequence \eqref{eq:ses} below. 
Also, the authors would like to thank the referee for his/her careful reading and giving 
helpful comments to this paper. 
The authors were partially supported by KAKENHI (No.~24740040,  
No.~24740035, and 15H03618), 
%Ministry of Education, Science, Sports and Technology, 
Japan Society for the Promotion of Science, Japan.

\section{Character formulae}\label{sec:character}

In \cite{kontsevich1}\cite{kontsevich2}, Kontsevich described
the following result.

\begin{thm}[Kontsevich {\cite[Theorem 3.2]{kontsevich2}}]\label{thm:k}
Let $W_k$ be the $\mathfrak{S}_{k+2}$-module with character
\begin{align*}
\chi(1^{k+2})&=k!, \\
\chi(1^1a^b)&=(b-1)! a^{b-1} \mu(a) \quad (a \ge 2, b \ge 1, ab=k+1),\\
\chi(a^b)&=-(b-1)! a^{b-1} \mu(a) \quad (a \ge 2, b \ge 1, ab=k+2),
\end{align*}
\noindent
%$\chi(1^{k+2})=k!,$ $\chi(1^1a^b)=(b-1)! a^{b-1} \mu(a),
%\chi(a^b)=-(b-1)! a^{b-1} \mu(a)$, 
and $\chi$ vanishes on all the other conjugacy classes, 
where $\mu$ denotes the M\"obius function. Then there exists an isomorphism
$$
\mathfrak{h}_{g,1}(k)\cong H^{\otimes (k+2)}\otimes_{A_{k+2}} W_k
$$
of\/ $\mathrm{Sp}(2g,\Q)$-modules, where $A_{k+2}=\Q\mathfrak{S}_{k+2}$
denotes the group algebra of $\mathfrak{S}_{k+2}$.
\end{thm}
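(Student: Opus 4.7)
The plan is to extract $W_k$ from the defining short exact sequence of $\hg(k)$ by passing to the operadic presentation of the free Lie algebra, and then to read off the character of $W_k$ from the classical character formula for the Lie representation.

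I would begin with the standard isomorphism $\mathcal{L}_H(n)\cong H^{\otimes n}\otimes_{\Q\mathfrak{S}_n}\mathrm{Lie}(n)$, where $\mathrm{Lie}(n)$ is the arity-$n$ component of the Lie operad, viewed as a right $\Q\mathfrak{S}_n$-module. Combined with the canonical identification
\[
H\otimes\mathcal{L}_H(k+1)\;\cong\;H^{\otimes(k+2)}\otimes_{\Q\mathfrak{S}_{k+2}}\bigl(\mathrm{Ind}_{\mathfrak{S}_{k+1}}^{\mathfrak{S}_{k+2}}\mathrm{Lie}(k+1)\bigr),
\]
the bracket map $[\,,\,]\colon H\otimes\mathcal{L}_H(k+1)\to\mathcal{L}_H(k+2)$ is obtained by applying the functor $H^{\otimes(k+2)}\otimes_{\Q\mathfrak{S}_{k+2}}(-)$ to the operadic composition
\[
\rho\colon\mathrm{Ind}_{\mathfrak{S}_{k+1}}^{\mathfrak{S}_{k+2}}\mathrm{Lie}(k+1)\twoheadrightarrow\mathrm{Lie}(k+2),
\]
which is an $\mathfrak{S}_{k+2}$-equivariant surjection. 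Setting $W_k:=\Ker\rho$ and using that $H^{\otimes(k+2)}\otimes_{\Q\mathfrak{S}_{k+2}}(-)$ is exact (a consequence of Maschke's theorem: $\Q\mathfrak{S}_{k+2}$ is semisimple and so $H^{\otimes(k+2)}$ is a flat right $\Q\mathfrak{S}_{k+2}$-module), one obtains the claimed isomorphism $\hg(k)\cong H^{\otimes(k+2)}\otimes_{A_{k+2}}W_k$ of $\mathrm{Sp}(2g,\Q)$-modules.

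It then remains to identify the character of $W_k$. Since $\rho$ is surjective, $\chi_{W_k}=\chi_{\mathrm{Ind}_{\mathfrak{S}_{k+1}}^{\mathfrak{S}_{k+2}}\mathrm{Lie}(k+1)}-\chi_{\mathrm{Lie}(k+2)}$, and here I would invoke the classical Brandt-Klyachko formula: for $\sigma\in\mathfrak{S}_n$ of cycle type $a^b$ with $ab=n$, $\chi_{\mathrm{Lie}(n)}(\sigma)=\mu(a)a^{b-1}(b-1)!$, while $\chi_{\mathrm{Lie}(n)}$ vanishes on every other cycle type. Combined with the induced character formula
\[
\chi_{\mathrm{Ind}_{\mathfrak{S}_{k+1}}^{\mathfrak{S}_{k+2}}V}(\sigma)=\sum_{i\colon\sigma(i)=i}\chi_V\bigl(\sigma|_{\{1,\ldots,k+2\}\setminus\{i\}}\bigr),
\]
summed over the fixed points of $\sigma$, a direct case-by-case check yields the four values stated in the theorem: at $\sigma=e$ one computes $(k+2)k!-(k+1)!=k!$; at cycle type $1^1a^b$ with $ab=k+1$ only the unique fixed point contributes and $\chi_{\mathrm{Lie}(k+2)}$ vanishes, giving $(b-1)!a^{b-1}\mu(a)$; and at cycle type $a^b$ with $ab=k+2$ and $a\geq 2$ the induced character vanishes (no fixed points), leaving $-(b-1)!a^{b-1}\mu(a)$.

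The main obstacle is verifying the asserted vanishing of $\chi_{W_k}$ on all remaining conjugacy classes. The key combinatorial observation is that if $\sigma$ has at least one fixed point but is not of cycle type $1^{k+2}$ or $1^1a^b$, then for every fixed point $i$ the restriction $\sigma|_{\{1,\ldots,k+2\}\setminus\{i\}}$ carries cycles of at least two distinct lengths, making each summand of $\chi_{\mathrm{Ind}}(\sigma)$ zero by Brandt-Klyachko; simultaneously $\chi_{\mathrm{Lie}(k+2)}(\sigma)=0$ because the cycle type of $\sigma$ itself is not of the form $c^d$. This elementary but careful inspection completes the character identification, and hence the theorem.
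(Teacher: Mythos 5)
Your argument is essentially the paper's own proof: both identify $H\otimes\mathcal{L}_H(k+1)$ with $H^{\otimes(k+2)}\otimes_{A_{k+2}}\mathrm{Ind}_{\mathfrak{S}_{k+1}}^{\mathfrak{S}_{k+2}}L_{k+1}$ (your $\mathrm{Lie}(n)$ is the paper's $L_n$, with the same Witt--Brandt character) and then read off $\chi_{W_k}$ as the difference of the induced character and $\chi_{L_{k+2}}$ via the short exact sequence $0\to\mathfrak{h}_{g,1}(k)\to H\otimes\mathcal{L}_H(k+1)\to\mathcal{L}_H(k+2)\to 0$. Your realization of $W_k$ as the honest kernel of the operadic composition, justified by flatness of $H^{\otimes(k+2)}$ over the semisimple algebra $\mathbb{Q}\mathfrak{S}_{k+2}$, is a slightly more explicit rendering of the paper's appeal to a ``virtual representation,'' but the route and the case-by-case character check are the same.
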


Kontsevich mentioned that for the proof of the above theorem
he developed a language of non-commutative geometry but
explicit description was not given. 
Here we would like to present an argument, 
which we noticed some time ago, showing that the
above theorem can be deduced directly from a
classical result in the theory of free Lie algebras. We mention 
that a combinatorial treatment of the above theorem of Kontsevich 
is given in \cite[Remark 7.5]{es}. 

%within the framework of representation theory.
%Here we would like to present a
%hopefully simpler argument
%which shows that the
%above theorem can be deduced directly from a
%classical result in the theory of free Lie algebras.
%We noticed this argument some time ago. 

Let $F_n$ be a free group of rank $n\geq 2$ and 
let $\mathcal{L}_n$ be the free Lie algebra generated 
by $H_1(F_n;\Q)\cong \Q^n$. We denote by
$\mathcal{L}_{n}(k)$
the degree $k$ part with respect to the natural grading. 
When $n=2g$, we can identify $H_1 (F_n;\Q)$ and $\mathcal{L}_n(k)$ with 
$H$ and $\mathcal{L}_H(k)$ as $\mathrm{GL}(n,\Q)$-modules. 

The classical character formula 
for the $\mathrm{GL}(n,\Q)$-module 
$\mathcal{L}_{n}(k)$ (\cite[Theorem 8-3]{r}) 
can be phrased, 
somewhat differently from the usual expression, 
as follows.

\begin{thm}[Witt-Brandt, see {\cite{r}}]\label{thm:wb}
Let $L_k$ be the $\mathfrak{S}_{k}$-module with character
\begin{align*}
\chi(1^{k})&=(k-1)!, \\
\chi(a^b)&=(b-1)! a^{b-1} \mu(a) \quad (a \ge 2, b \ge 1, ab=k),
\end{align*}
\noindent
and $\chi$ vanishes on all the other
conjugacy classes. Then there exists an isomorphism
$$
\mathcal{L}_n(k)\cong H_1(F_n;\Q)^{\otimes k}\otimes_{A_k} L_k
$$
as a $\mathrm{GL}(n,\Q)$-module.
\end{thm}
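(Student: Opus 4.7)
The plan is to verify the claimed isomorphism by matching $\mathrm{GL}(n,\Q)$-characters on both sides, viewed as symmetric functions in the eigenvalues $x_1,\ldots,x_n$; since both modules are polynomial representations of $\mathrm{GL}(n,\Q)$, equality of characters forces the isomorphism.

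First I would unpack the right-hand side via Schur--Weyl duality. For any $\mathfrak{S}_k$-module $M$, the Frobenius characteristic map yields
$$\mathrm{ch}_{\mathrm{GL}}\!\bigl(H_1(F_n;\Q)^{\otimes k}\otimes_{A_k} M\bigr)\ =\ \sum_{\nu\vdash k}\frac{\chi_M(\nu)}{z_\nu}\,p_\nu,$$
where $p_\nu=\prod_i p_{\nu_i}$ is the power-sum symmetric function and $z_\nu$ is the centralizer order in $\mathfrak{S}_k$ of a permutation of cycle type $\nu$. Inserting the stipulated character of $L_k$, only the class $1^k$ (with $z_{1^k}=k!$) and the classes $a^b$ satisfying $ab=k$ (with $z_{a^b}=a^b\,b!$) contribute: the $1^k$ term is $\tfrac{(k-1)!}{k!}\,p_1^k=\tfrac{1}{k}\,p_1^k$, and the $a^b$ term is $\tfrac{(b-1)!\,a^{b-1}\mu(a)}{a^b\,b!}\,p_a^b=\tfrac{\mu(a)}{k}\,p_a^{k/a}$. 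Collecting by divisors,
$$\mathrm{ch}_{\mathrm{GL}}\!\bigl(H^{\otimes k}\otimes_{A_k}L_k\bigr)\ =\ \frac{1}{k}\sum_{d\mid k}\mu(d)\,p_d^{k/d}.$$

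Second, I would invoke the classical Witt--Brandt character formula $\mathrm{ch}_{\mathrm{GL}}(\mathcal{L}_n(k))=\frac{1}{k}\sum_{d\mid k}\mu(d)\,p_d^{k/d}$, namely \cite[Theorem 8-3]{r}. One clean derivation uses PBW: the isomorphism $T(V)\cong S(\mathcal{L}(V))$ of graded $\mathrm{GL}(V)$-modules translates, after taking characters, into the plethystic identity
$$\frac{1}{1-p_1 q}\ =\ \exp\!\left(\sum_{a\geq 1}\frac{1}{a}\,\psi_a\!\bigl(F(q)\bigr)\right),\qquad F(q)=\sum_{k\geq 1}\mathrm{ch}_{\mathrm{GL}}(\mathcal{L}_n(k))\,q^k,$$
with $\psi_a$ the Adams operation $p_d\mapsto p_{ad}$; applying the logarithm and Möbius-inverting extracts the displayed closed formula termwise in $q$. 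Matching this with the calculation of the previous paragraph produces the asserted $\mathrm{GL}(n,\Q)$-module isomorphism.

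The argument is almost entirely classical: the genuine content lies in the Witt--Brandt formula itself, while the matching is a direct centralizer-order calculation. The only real pitfall is keeping the two Möbius inversions straight---one concealed in the derivation of the Witt--Brandt formula, the other appearing implicitly when the $z_\nu$-weighted sum $\sum_\nu\chi_{L_k}(\nu)\,p_\nu/z_\nu$ is reindexed by divisors of $k$---but both are routine, so the main ``obstacle'' is really just bookkeeping.
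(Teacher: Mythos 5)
Your argument is correct, and it is worth noting that the paper itself offers no proof of this statement at all: it is presented as a classical result, ``phrased somewhat differently from the usual expression,'' with a pointer to \cite[Theorem 8-3]{r}. What you have supplied is precisely the translation that the paper leaves implicit, and it is sound. The Frobenius characteristic computation is right: for cycle type $a^b$ with $ab=k$ one has $z_{a^b}=a^b\,b!$, so the contribution is $\frac{(b-1)!\,a^{b-1}\mu(a)}{a^b\,b!}\,p_a^b=\frac{\mu(a)}{k}\,p_a^{k/a}$ (and the $1^k$ class is just the $a=1$ case, consistently), giving $\frac{1}{k}\sum_{d\mid k}\mu(d)\,p_d^{k/d}$; matching this against the Witt--Brandt symmetric function and invoking semisimplicity of polynomial $\mathrm{GL}(n,\Q)$-representations closes the argument. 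Two small points of care: in the PBW/plethystic step the Adams operation $\psi_a$ must act on the grading variable as well, $q\mapsto q^a$, not only on the power sums $p_d\mapsto p_{ad}$ --- your notation $\psi_a(F(q))$ glosses over this, and the M\"obius inversion $F=\sum_a\frac{\mu(a)}{a}\psi_a(H)$ only works with that convention. Also, strictly speaking your computation shows the two $\mathrm{GL}(n,\Q)$-characters agree as symmetric polynomials in $n$ variables; this is exactly what is needed since Schur polynomials in $n$ variables indexed by diagrams with at most $n$ rows remain linearly independent, and Schur functors for longer diagrams vanish on both sides. In short: the paper relies on citation where you have given a complete and standard derivation; nothing is missing.
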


\begin{proof}[Proof of Theorem $\ref{thm:k}$]
Recall that we identify $\mathfrak{h}_{g,1}(k)$
with the $\mathrm{GL}(2g,\Q)$-module described in the second
line of \eqref{eq:sd}. It follows that we have a short
exact sequence 
\begin{equation}
\label{eq:ses}
0\longrightarrow 
\mathfrak{h}_{g,1}(k)
\longrightarrow
H\otimes \mathcal{L}_H(k+1)
\longrightarrow
\mathcal{L}_H(k+2)
\longrightarrow
0
\end{equation}
of $\mathrm{GL}(2g,\Q)$-modules.
By Theorem \ref{thm:wb}, we have an isomorphism
$$
\mathcal{L}_H(k+2)\cong H^{\otimes (k+2)} \otimes_{A_{k+2}} 
L_{k+2}
$$
and the character of $L_{k+2}$, denoted by 
$\chi_{k+2}$, is given by
\begin{equation}
\label{eq:chi}
\begin{split}
\chi_{k+2}&(1^{k+2})=(k+1)!, \quad
\chi_{k+2}(a^b)=(b-1)! a^{b-1} \mu(a), \\
&\chi_{k+2}(\text{other conjugacy class})=0.
\end{split}
\end{equation}
On the other hand, it can be seen that the tensor
product $H\otimes \mathcal{L}_H(k+1)$ corresponds to
the induced representation
$\mathrm{Ind}^{\mathcal{S}_{k+2}}_{\mathcal{S}_{k+1}} L_{k+1}$.
By the well-known formula for the character of induced
representations (see e.g. \cite{fh}), we can deduce that
the character of this induced representation,
denoted by $\chi_{k+1}^{k+2}$, is given by
\begin{equation}
\label{eq:chii}
\begin{split}
\chi_{k+1}^{k+2}&(1^{k+2})=(k+2) k!, \quad
\chi_{k+1}^{k+2}(1^1a^b)=(b-1)! a^{b-1} \mu(a), \\
&\chi_{k+1}^{k+2}(\text{other conjugacy class})=0.
\end{split}
\end{equation}
Here the number $k+2$ appearing in the upper line 
corresponds to the index of the subgroup 
$\mathfrak{S}_{k+1}\subset \mathfrak{S}_{k+2}$.
Now in view of the exact sequence \eqref{eq:ses}, the module 
$\mathfrak{h}_{g,1}(k)$ corresponds to the 
virtual representation
$\mathrm{Ind}^{\mathcal{S}_{k+2}}_{\mathcal{S}_{k+1}} L_{k+1}
-L_{k+2}$ whose character is given by the difference
\eqref{eq:chii}$-$\eqref{eq:chi}.
This is the required formula and the proof is complete.
\end{proof}

\section{Proof of the main result and an application}\label{sec:proof}

To prove the main theorem, we prepare the following
simple technical lemma.

\begin{lem}
\label{lem:tech}
Let $c$ be a positive integer which is not congruent
to $2$ $\mathrm{mod}\ 4$. Then for any decomposition
$$
c=ab\quad 
$$
with positive integers $a, b$, the following condition
holds:
$$
\text{$(\ast)$ if $a$ is even and $\mu(a)\not=0$, then $b$ is even.}
$$
\end{lem}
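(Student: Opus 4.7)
The plan is to unwind the hypothesis $\mu(a) \neq 0$ and combine it with the parity constraint on $c$. Recall that $\mu(a) \neq 0$ is equivalent to $a$ being squarefree. Thus if we also assume $a$ is even, the prime $2$ divides $a$ exactly once, which forces $a \equiv 2 \pmod 4$. This is the only structural fact about $a$ that I need.

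Next I would argue by contradiction: suppose $b$ is odd. Writing $a = 2m$ with $m$ odd (since $a$ is squarefree and even), we get
\[
c = ab = 2mb,
\]
and $mb$ is a product of two odd numbers, hence odd. Therefore $c \equiv 2 \pmod 4$, contradicting the hypothesis on $c$. Hence $b$ must be even, which is condition~(*).

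There is no real obstacle here; the lemma is essentially a bookkeeping remark about the interaction between squarefreeness and $2$-adic valuation. The only thing to make sure of is that the two cases allowed for $c$, namely $c$ odd and $c \equiv 0 \pmod 4$, are both handled uniformly, which they are: in both cases $v_2(c) \neq 1$, so $c$ cannot have the factorization $2 \cdot (\text{odd})$ that would arise from $a \equiv 2 \pmod 4$ and $b$ odd.
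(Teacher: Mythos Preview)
Your argument is correct and is essentially the same as the paper's: both reduce to the observation that $\mu(a)\neq 0$ forces $4\nmid a$, so an even such $a$ satisfies $a\equiv 2\pmod 4$, and then the hypothesis $c\not\equiv 2\pmod 4$ rules out $b$ odd. The paper organizes this as a case split on $c$ (odd versus divisible by $4$) rather than a contradiction argument, but the content is identical.
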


\begin{proof}
First of all, the equality $4m+2=2(2m+1)$ shows that
the condition on $c$ is necessary. Now assume the
required condition. If $c$ is odd, then $c$ is not divisible
by any even number so that $(\ast)$ clearly holds. 
Hence we can assume $c=4d$ for some $d$. If $c=ab$
such that $a$ is even and $\mu(a)\not=0$, then 
$a$ is not divisible by $4$ because otherwise $\mu(a)=0$.
It follows that $b$ is even and the proof is complete.
\end{proof}

\begin{proof}[Proof of Theorem $\ref{thm:s}$]
Let $\la=[\la_1,\cdots,\la_{h}]$ be a Young diagram
with $(k+2)$ boxes and let $V_\la$ be the corresponding
irreducible representation of the symmetric group
$\mathfrak{S}_{k+2}$. Then the multiplicity $m_\la$ of
$V_\la$ in $W_k$ is expressed as
$$
m_\la=\frac{1}{(k+2)!} \sum_{\gamma\in \mathfrak{S}_{k+2}}
\chi_k (\gamma)\chi_\la^V(\gamma)
$$
where $\chi_k$ is the character of $L_k$ given by (\ref{eq:chi}) and 
$\chi_\la^V$ denotes the character of $V_\la$.
On the other hand, by Theorem \ref{thm:k} the value
$\chi_k (\gamma)$ may be non-zero only in the cases
where the conjugacy class of $\gamma\in\mathfrak{S}_{k+2}$ 
is one of the following three types $1^{k+2}, 1^1a^b, a^b$.
In the latter two types, two positive integers $a, b$
satisfy the equality $ab=k+1$ or $ab=k+2$.
If $k$ satisfies the required condition, namely
if $k\equiv 2\ \text{or}\ 3\ \mathrm{mod} \ 4$,
then both $k+1$ and $k+2$ are 
{\it not} congruent to $2$ mod $4$. Therefore by
Lemma \ref{lem:tech}, we conclude that the 
two numbers $a,b$ satisfy one of the following
three conditions
\begin{align*}
& \mathrm{(i)}\ \text{$a$ is odd} \\
& \mathrm{(ii)}\ \text{$a$ is even and $\mu(a)=0$} \\
& \mathrm{(iii)}\ \text{$a$ is even and $b$ is even}.
\end{align*}
The sign of any odd cycle $a$ is $+1$ while 
the sign of any even cycle $a$ is $-1$. Therefore
if $\gamma$ satisfies $\mathrm{(i)}$
or $\mathrm{(iii)}$ of the above conditions,
then the sign of $\gamma$ is $+1$. Of course the sign
of $1^{k+2}$ is $+1$. We can now conclude the following.
If $\chi_k(\gamma)\not=0$, then the sign of $\gamma$,
which is the same as the character value $\chi_{[1^{k+2}]}^V(\gamma)$ of $\gamma$ on the alternating
representation $[1^{k+2}]$, is $+1$. It follows that we have
the equality
$$
\chi_\la^V(\gamma)=\chi_{\la'}^V(\gamma)
$$
on such element $\gamma$ where $\la'$ denotes the conjugate
Young diagram of $\la$. Summing up, we now conclude
$$
m_\la=m_{\la'}
$$
which finishes the proof.
\end{proof}

\begin{remark}
Under the same condition as in Theorem \ref{thm:s},
we have an isomorphism
$$
W_k\cong W_k\otimes V_{[1^{k+2}]}
$$
of $\mathfrak{S}_{k+2}$-modules.
\end{remark}

As for the case of the free Lie algebra,
we have the following symmetry which was
already proved in \cite{z}.
Here we describe a proof for the sake of 
self-containedness of this note.

\begin{prop}[Zhuravlev {\cite[Proposition 4.1]{z}}]
Assume that 
$k\equiv 0,\ 1\ \text{or}\ 3\ \mathrm{mod} \ 4$.
Then the stable irreducible decomposition of
$\mathcal{L}_{n}(k)$, which is expressed 
as a linear combination of Young
diagrams with $k$ boxes,
is symmetric with respect to taking the
conjugate Young diagrams. 
\end{prop}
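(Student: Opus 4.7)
The plan is to mirror the proof of Theorem~\ref{thm:s} but substitute the Witt--Brandt character formula of Theorem~\ref{thm:wb} for Kontsevich's formula. Via Schur--Weyl duality, the stable decomposition of $\mathcal{L}_n(k)$ as a $\mathrm{GL}(n,\Q)$-module is governed by the decomposition of the $\mathfrak{S}_k$-module $L_k$: the multiplicity $m_\la$ of the Young diagram $\la$ with $k$ boxes is
$$
m_\la = \frac{1}{k!} \sum_{\gamma \in \mathfrak{S}_k} \chi_k(\gamma)\,\chi_\la(\gamma),
$$
where $\chi_k$ denotes the character of $L_k$. Because $V_{\la'} \cong V_\la \otimes V_{[1^k]}$, one has $\chi_{\la'}(\gamma) = \mathrm{sgn}(\gamma)\,\chi_\la(\gamma)$, so establishing $m_\la = m_{\la'}$ reduces to verifying that $\mathrm{sgn}(\gamma) = +1$ on every $\gamma$ where $\chi_k(\gamma)\neq 0$.

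By Theorem~\ref{thm:wb}, the support of $\chi_k$ consists of the identity class $1^k$, where the sign is trivially $+1$, together with conjugacy classes of cycle type $a^b$ with $ab = k$ and $\mu(a)\neq 0$. I would then split the latter case on the parity of $a$: if $a$ is odd, each cycle of length $a$ is an even permutation and hence $\mathrm{sgn}(\gamma) = +1$; if $a$ is even, then since $\mu(a)\neq 0$ forces $a$ squarefree, Lemma~\ref{lem:tech} applied to $c = k$ guarantees that $b$ is also even, so again $\mathrm{sgn}(\gamma) = (-1)^b = +1$. The hypothesis $c \not\equiv 2\ \mathrm{mod}\ 4$ of the lemma is exactly the assumption $k \equiv 0, 1, 3\ \mathrm{mod}\ 4$ of the proposition.

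Combining these two steps, $\chi_\la$ and $\chi_{\la'}$ agree pointwise on the support of $\chi_k$, so term-by-term equality in the above inner product immediately yields $m_\la = m_{\la'}$. There is really no serious obstacle here: the proof is essentially a one-step simplification of the proof of Theorem~\ref{thm:s}, requiring Lemma~\ref{lem:tech} only once (for $c = k$) rather than twice (for $c = k+1$ and $c = k+2$). The only point to keep straight is that the congruence hypothesis $k \equiv 0, 1, 3\ \mathrm{mod}\ 4$ is precisely the admissibility condition of the lemma applied to $c = k$, paralleling how $k \equiv 2, 3\ \mathrm{mod}\ 4$ in Theorem~\ref{thm:s} ensured admissibility for both $c = k+1$ and $c = k+2$.
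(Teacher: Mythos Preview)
Your proposal is correct and is precisely the argument the paper has in mind: its one-line proof says to replace Theorem~\ref{thm:k} by Theorem~\ref{thm:wb} in the proof of Theorem~\ref{thm:s}, which is exactly what you have spelled out. Your observation that Lemma~\ref{lem:tech} is needed only once (for $c=k$) rather than twice is the right simplification, and the congruence bookkeeping matches.
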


\begin{proof}
The proof is given by replacing Theorem \ref{thm:k} with 
Theorem \ref{thm:wb} in the above argument. 
%The proof is almost the same as above, using
%Theorem \ref{thm:wb} instead of Theorem \ref{thm:k}.
\end{proof}

As a sample application of our result, we show 
that a close relation exists between an earlier result of 
Asada and Nakamura in \cite{an} and one particular
result of Enomoto and Satoh in \cite{es}.
More precisely, it was proved in the former
paper that the $\mathrm{Sp}$-irreducible representation
corresponding to the Young diagram $[2k,2]$
occurs in $\mathfrak{h}_{g,1}(2k)$
with multiplicity one for any $k$. It is easy to see
that this representation is the restriction of
a unique $\mathrm{GL}(2g,\Q)$ irreducible representation
corresponding to the same Young diagram.
On the other hand,
it was proved in the latter paper that 
the $\mathrm{GL}(2g,\Q)$ irreducible representation
$[2^2,1^{4m}]$ occurs in $\mathfrak{h}_{g,1}(4m+2)$
with multiplicity one for any $m$.
 If we set $k=2m+1$ in the former case,
we see that both $\mathrm{GL}(2g,\Q)$ irreducible 
representations corresponding to the Young diagrams 
$[4m+2,2]$ and $[2^2,1^{4m}]$ occur in
$\mathfrak{h}_{g,1}(4m+2)$
with multiplicity one.
Observe that these two Young diagrams are conjugate 
to each other. Therefore these two results are 
equivalent in the framework of our main result.

We have explicit general results concerning
multiplicities of several types of Young diagrams
which occur in $\mathfrak{h}_{g,1}$
other than those treated in \cite{an}\cite{es}. 
However, here we omit them.

Finally we make a remark.

\begin{remark}\label{rem:ag}
In the case of the Lie algebra 
$\mathfrak{a}_g=\bigoplus_k \mathfrak{a}_g(k)$
consisting of symplectic derivations of 
free associative algebra generated by $H$
without unit, introduced by Kontsevich 
\cite{kontsevich1}\cite{kontsevich2},
it is easy to see that the 
$\mathrm{GL}(2g,\Q)$ stable irreducible decomposition of 
degree $k$ part $\mathfrak{a}_g(k)$
has the symmetry under taking conjugate
Young diagrams for any odd $k$. This is because the 
sign of the
cyclic permutation $(12\cdots k+2)\in \mathfrak{S}_{k+2}$
is $+1$ for any odd $k$.
For example, as $\mathrm{GL}(2g,\Q)$-modules, we have
\begin{align*}
\mathfrak{a}_g(1)&=[3]\oplus [1^3]\\
\mathfrak{a}_g(3)&=[5]\oplus 
[32]\oplus 
2[31^2]\oplus 
[2^21]\oplus 
[1^5]\\
\mathfrak{a}_g(5)&=[7]\oplus
2[52]\oplus
3[51^2]\oplus
2[43]\oplus
5[421]\oplus
2[41^3]\oplus
3[3^21]\oplus
3[32^2]\oplus
5[321^2]\\
& \quad \ \oplus
3[31^4]\oplus
2[2^31]\oplus
2[2^21^3]\oplus
[1^7] \\
\mathfrak{a}_g(7)&=[9]\oplus3[72]\oplus4[71^2]\oplus6[63]\oplus11[621]\oplus6[61^3]
\oplus4[54]\oplus18[531]\oplus14[52^2]\\
& \quad \ \oplus21[521^2]\oplus8[51^4]\oplus10[4^21] \oplus 18[432]\oplus24[431^2]
\oplus24[42^21]\oplus21[421^3]\\
& \quad \ \oplus6[41^5]\oplus6[3^3]\oplus18[3^221]
\oplus14[3^31^3] \oplus 10[32^3]\oplus18[32^21^2]\oplus11[321^4]\\
& \quad \ \oplus4[31^6] \oplus4[2^41]\oplus6[2^31^3]
\oplus3[2^21^5]\oplus[1^9]
\end{align*}
and it is easy to see that these satisfy the 
required symmetry. In this case also, we have determined 
the irreducible decomposition of $\mathfrak{a}_{g}(k)$ for all 
$k\leq 20$.
\end{remark}

\bibliographystyle{amsplain}

\begin{thebibliography}{30}

\bibitem{an}
M.~Asada, H.~Nakamura, 
\textit{On the graded quotient modules of mapping class groups of surfaces}, 
Israel.\ J.\ Math. 90 (1995) 93--113. 

\bibitem{ckv}
J.~Conant, M.~Kassabov, K.~Vogtmann, 
\textit{Hairy graphs and the unstable homology of
$\mathrm{Mod}(g,s)$, $\mathrm{Out}(F_n)$ and $\mathrm{Aut}(F_n)$}, 
 J.\ Topol. 6 (2013), 119--153. 

\bibitem{es}
N.~Enomoto, T.~Satoh, 
\textit{New series in the Johnson cokernels of the mapping class groups of surfaces},
Algebr.\ Geom.\ Topol. 14 (2014), 627--669.

\bibitem{fh}
W.~Fulton, J.~Harris, 
``Representation Theory", 
Graduate Texts in Mathematics 129, Springer Verlag, 1991.

\bibitem{hain}
R.~Hain, 
\textit{Infinitesimal presentations of the Torelli groups}, 
J.\ Amer. \ Math. \ Soc. 10 (1997) 597--651.

\bibitem{kontsevich1}
M.~Kontsevich, 
\textit{Formal (non)commutative symplectic geometry}, 
from: ``The Gel'fand Mathematical Seminars, 1990--1992'', 
Birkh\"auser, Boston (1993) 173--187. 

\bibitem{kontsevich2}
M.~Kontsevich, 
\textit{Feynman diagrams and low-dimensional topology}, 
from: ``First European Congress of Mathematics, Vol. II (Paris, 1992)'', 
Progr.\ Math. 120, Birkh\"auser, Basel (1994) 97--121.

\bibitem{morita93} S.~Morita, 
\textit{Abelian quotients of subgroups of the mapping class 
group of surfaces}, Duke Math.\ J. 70 (1993), 699--726.

\bibitem{morita06} S.~Morita, 
\textit{Cohomological structure of the mapping class group
and beyond}, in: ``Problems on Mapping Class Groups and related Topics", edited by Benson Farb, 
Proc.\ Sympos.\ Pure\ Math. 74 (2006), American Mathematical Society, 329--354.

\bibitem{morita11}
S.~Morita, 
\textit{Canonical metric on the space of symplectic invariant tensors and its applications}, 
preprint, arXiv:1404.3354.

\bibitem{mss1}
S.~Morita, T.~Sakasai, M.~Suzuki, 
\textit{Abelianizations of derivation Lie algebras of the free associative algebra and the free Lie algebra}, 
Duke Math.\ J. 162 (2013), 965--1002.

\bibitem{mss1a}
S.~Morita, T.~Sakasai, M.~Suzuki, 
\textit{Computations in formal symplectic geometry and characteristic classes of moduli spaces}, 
Quantum Topol. 6 (2015), 139--182.

\bibitem{mss2}
S.~Morita, T.~Sakasai, M.~Suzuki, 
\textit{Structure of symplectic invariant Lie subalgebras of symplectic derivation Lie algebras}, 
Adv.\ Math. 282 (2015), 291--334.

\bibitem{r}
C.~Reutenauer, 
``Free Lie Algebras", London Mathematical Society Monographs,
New Series 7, Oxford University Press 1993.

\bibitem{z}
V.~M.~Zhuravlev, 
\textit{A free Lie algebra as a module over the full linear group}, 
Mat.\ Sb. 187 (1996), 59--80; translation in Sb.\ Math. 187 (1996), 215--236.


\end{thebibliography}

\end{document}